\documentclass[11pt,reqno]{amsart}
\usepackage[T1]{fontenc}
\usepackage{lmodern}
\usepackage{amsmath,amssymb,amsthm,mathtools}
\usepackage[margin=1in]{geometry}
\usepackage[colorlinks=true,linkcolor=blue,citecolor=blue,urlcolor=blue]{hyperref}
\usepackage{microtype}

\allowdisplaybreaks
\numberwithin{equation}{section}

\newtheorem{theorem}{Theorem}[section]
\newtheorem{proposition}[theorem]{Proposition}
\newtheorem{lemma}[theorem]{Lemma}
\newtheorem{corollary}[theorem]{Corollary}
\theoremstyle{definition}

\theoremstyle{remark}
\newtheorem{remark}[theorem]{Remark}

\newcommand{\E}{\mathbb E}
\newcommand{\Pp}{\mathbb P}
\newcommand{\R}{\mathbb R}
\newcommand{\Z}{\mathbb Z}
\newcommand{\cP}{\mathcal P}

\newcommand{\CBB}{\mathrm{CBB}(0)}

\title[Metric cotype of nonnegatively curved spaces]
{The Optimal Scaling Parameter in Metric Cotype for Alexandrov Spaces\\
of Nonnegative Curvature}
\author{Qingjin Cheng}
\address{Qingjin Cheng: School of Mathematical Sciences, Xiamen University,
Xiamen 361005, China}
\email{qjcheng@xmu.edu.cn}

\author{Longfa Sun}
\address{Longfa Sun: Hebei Key Laboratory of Physics and Energy Technology,
School of Mathematics and Physics, North China Electric Power University,
Baoding, 071003, China}
\email{sun.longfa@ncepu.edu.cn}

\author{Yipeng Zhang}
\address{Yipeng Zhang: School of Mathematical Sciences, Xiamen University,
Xiamen 361005, China}
\email{19020250157654@stu.xmu.edu.cn}
\thanks{This work was supported by the National Natural Science Foundation of China  (Grant No. 12071389) and the Fundamental Research Funds for the Central Universities (Grant No. 2025MS178).}
\date{}

\subjclass[2020]{Primary 51F99; Secondary 46B85, 53C23}
\keywords{Metric cotype, scaling parameter, Alexandrov space,
nonnegative curvature, Wasserstein space}

\begin{document}
\begin{abstract}
We determine the optimal order of the scaling parameter in the metric
cotype $2$ inequality for complete Alexandrov spaces of nonnegative
curvature.  It grows linearly with the dimension $n$ of the discrete
torus.  This answers Question~17 of Eskenazis, Mendel, and Naor.
In their sign-vector normalization on $\bigl(\Z/(2m\Z)\bigr)^n$, the square of the
optimal metric cotype $2$ constant for this class lies between
$\max\{1,n/m\}$ and $e\lceil n/m\rceil$ for every dyadic $m\ge2$.
The upper bound follows from a dyadic Bernoulli-thinning argument using
only the Lang--Schroeder--Sturm inequality.  Snowflake universality of
the fixed Wasserstein space $\cP_2(\R^3)$ gives the $n/m$ lower bound.
\end{abstract}
\maketitle

\section{Introduction and main results}

Metric cotype was introduced by Mendel and Naor as a nonlinear
counterpart of Rademacher cotype \cite{MN08}.  The asymptotic behavior
of its scaling parameter is important in applications to coarse and
uniform embeddings; see, e.g., \cite[Sections~1.2 and~1.3.1]{EMN19}.

We use the sign-vector formulation of metric cotype from
\cite[equation~(4)]{EMN19}.  A metric space $(X,d_X)$ is said to have
metric cotype $2$ with constant $\Gamma\in(0,\infty)$ if for every
$n\in\mathbb N$ there exists some $m=m(n,X)\in\mathbb N$ such that every
map $f:\Z_{2m}^n\to X$ satisfies
\begin{equation}
 \left(
 \sum_{j=1}^n\sum_{x\in\Z_{2m}^n}
 d_X\bigl(f(x+me_j),f(x)\bigr)^2
 \right)^{1/2}
 \le
 \Gamma m
 \left(
 \frac1{2^n}
 \sum_{\varepsilon\in\{-1,1\}^n}\sum_{x\in\Z_{2m}^n}
 d_X\bigl(f(x+\varepsilon),f(x)\bigr)^2
 \right)^{1/2}.
 \label{eq:mc}
\end{equation}
Here $\Z_{2m}=\Z/(2m\Z)$, all additions in the arguments of $f$ are
modulo $2m$, and $e_1,\ldots,e_n$ are the standard basis vectors.  The
integer $m$ is called the scaling parameter.

In the original definition of Mendel and Naor \cite{MN08}, the average
in $\varepsilon$ is over $\{-1,0,1\}^n$ rather than $\{-1,1\}^n$.
The two definitions are equivalent; see \cite[Section~4]{EMN19} for the
quantitative comparison.

We next recall the curvature terminology used throughout.  Following
\cite[Section~1]{EMN19}, a complete geodesic metric space $(X,d_X)$
is an Alexandrov space of nonpositive curvature if, for every
$x,y,z\in X$ and every midpoint $w$ of $x$ and $y$, that is,
\[
 d_X(w,x)=d_X(w,y)=\frac12d_X(x,y),
\]
one has
\begin{equation}
 d_X(z,w)^2+\frac14d_X(x,y)^2
 \le
 \frac12d_X(z,x)^2+\frac12d_X(z,y)^2.
 \label{eq:alex-nonpositive}
\end{equation}
Here nonpositive curvature is understood in the global sense; these
are precisely the $\mathrm{CAT}(0)$ spaces.  If the reverse inequality
holds for every such quadruple $x,y,z,w$, then $X$ is an Alexandrov
space of nonnegative curvature.  We write $\mathrm{CBB}(0)$ for this
class, where CBB stands for curvature bounded below.

Eskenazis, Mendel, and Naor asked for the asymptotic behavior of the
smallest $m(n)\in\mathbb N$ such that every Alexandrov space of
nonnegative curvature has metric cotype $2$ with scaling parameter
$m=m(n)$ and $\Gamma=O(1)$ in \eqref{eq:mc}
\cite[Question~17]{EMN19}.

A general lower bound shows that the scaling parameter cannot be too
small.  If $X$ is non-singleton and \eqref{eq:mc} holds for every map
$f:\Z_{2m}^n\to X$, then $m\ge \frac{\sqrt n}{\Gamma};$
see \cite[Lemma~2.3]{MN08} and \cite[Section~1.2]{EMN19}.
Thus $m\gtrsim\sqrt n$ whenever $\Gamma$ is universal.

For Alexandrov spaces of nonpositive curvature, this lower bound is
sharp.  Eskenazis, Mendel, and Naor showed that one may take
$m\asymp\sqrt n$ with a universal $\Gamma$ \cite{EMN19}.

For Alexandrov spaces of nonnegative curvature, the situation was less
clear.  Eskenazis, Mendel, and Naor noted that their forthcoming work
on diamond convexity gives some admissible scaling parameter, but that
the resulting bound is probably far from asymptotically sharp
\cite[Remark~16 and Question~17]{EMN19}.  On the other hand, snowflake
universality \cite{ANN18} shows that the order $\sqrt n$ cannot hold
uniformly over this class.

We show that the optimal scaling parameter has linear order.  More
precisely, for $n\ge2$ one may take $m\le n$ with a universal metric
cotype constant, while any choice with constant $\Gamma$ must satisfy $m\ge \frac{n}{\Gamma^2}.$
Thus the optimal orders for the two signs of Alexandrov curvature are
\[
 \mathrm{CAT}(0):\quad m\asymp\sqrt n,
 \qquad
 \CBB:\quad m\asymp n.
\]

We now record a quantitative form of this statement.  For fixed
$n,m\in\mathbb N$, let $\mathsf K_{\CBB}(n,m)$ be the infimum of those
$K>0$ for which
\begin{equation}
 \sum_{j=1}^n\sum_{x\in\Z_{2m}^n}
 d_X\bigl(f(x+me_j),f(x)\bigr)^2
 \le
 K m^2\frac1{2^n}
 \sum_{\varepsilon\in\{-1,1\}^n}\sum_{x\in\Z_{2m}^n}
 d_X\bigl(f(x+\varepsilon),f(x)\bigr)^2
 \label{eq:K}
\end{equation}
for every complete Alexandrov space $X$ of nonnegative curvature and
every map $f:\Z_{2m}^n\to X$.  As usual, the infimum of the empty set
is $\infty$.

Thus $\mathsf K_{\CBB}(n,m)^{1/2}$ is the optimal metric cotype $2$
constant for this class at the fixed parameters $n$ and $m$.
Accordingly, for $\Gamma>0$, let
\begin{equation}
 \mathsf m_{\CBB}(n,\Gamma)
 =\inf\bigl\{m\in\mathbb N:
             \mathsf K_{\CBB}(n,m)\le\Gamma^2\bigr\},
 \label{eq:scaling-function}
\end{equation}
with the convention that the infimum of the empty set is infinite.

\begin{theorem}\label{thm:quantitative}
For every $n\in\mathbb N$ and every $m=2^r\ge2$, where $r\in\mathbb N$,
\begin{equation}
 \max\left\{1,\frac nm\right\}
 \le \mathsf K_{\CBB}(n,m)
 \le e\left\lceil\frac nm\right\rceil.
 \label{eq:quantitative}
\end{equation}
Consequently,
\[
 \mathsf K_{\CBB}(n,m)\asymp 1+\frac nm
\]
for every such $m$, with universal implicit constants.
\end{theorem}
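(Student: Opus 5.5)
We argue the two bounds separately. For the upper bound we use only the Lang--Schroeder--Sturm inequality. It says that a $\CBB$ space $X$ has Markov type $2$ in the reversed (``short diagonals'') sense: for every finite family of points and weights, sums of squared distances along ``long'' edges are controlled by sums along ``short'' edges. Concretely, we use it in the form of a quadrilateral/barycentric inequality. For $x,y,z,w\in X$ it gives
\[
d(x,z)^2+d(y,w)^2\le d(x,y)^2+d(y,z)^2+d(z,w)^2+d(w,x)^2 ,
\]
together with its weighted generalization: for random $Z,Z'$ drawn independently from a finitely supported law on $X$,
\[
\E\, d(Z,Z')^2\le 2\,\E\, d(Z,p)^2
\]
for suitable $p$, and, more importantly, its product form for sums of independent steps.

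The key step is a dyadic halving lemma. For $k\ge1$ and any $f:\Z_{2m}^n\to X$, we compare the average of $d(f(x+2^{k}\delta),f(x))^2$ with that of $d(f(x+2^{k-1}\delta),f(x))^2$. Here $\delta$ is a random vector whose coordinates are independent and lie in $\{-1,0,1\}$, and each coordinate is nonzero with probability $p_k$. We aim to prove
\[
\E\, d\bigl(f(x+2^k\delta_k),f(x)\bigr)^2\le 2^{2}\,\theta_k\,\E\, d\bigl(f(x+2^{k-1}\delta_{k-1}),f(x)\bigr)^2,
\]
with $\theta_k$ close to $1$ when the support densities are thinned appropriately (Bernoulli thinning). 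To get it, we write $2^k\delta$ as a sum of two independent copies of $2^{k-1}\delta'$ conditioned to agree, and apply the LSS inequality (the ``short diagonal'' direction) to the midpoint $x+2^{k-1}\delta$. Thinning the density at each of the $r=\log_2 m$ scales by a factor $(1-1/r)$-type multiplier produces a product of $\theta_k$'s bounded by $e$.

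Iterating from scale $1$ (full sign vectors, density $1$) up to scale $m$, we arrive at random vectors $m\delta$ whose support has expected size about $n/m$, since the thinning is chosen so that the density at the top scale is $\min\{1,m/n\}$-ish. A single basis edge $me_j$ is then extracted: $m\delta$ restricted to its support is a sum of $|\mathrm{supp}\,\delta|\le\lceil n/m\rceil$ antipodal moves. The identity $x+me_j=x-me_j$ in $\Z_{2m}$ makes the signs irrelevant, and the triangle inequality together with Cauchy--Schwarz costs the factor $\lceil n/m\rceil$. Summing over $j$ and tracking normalizations yields $e\lceil n/m\rceil m^2$ times the $\varepsilon$-average.

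The main obstacle is making the halving step work with genuine sign vectors $\{-1,1\}^n$ at scale $1$ and exact constants. I would first try to prove the halving step for the conditional law of $(\delta,\delta')$ via the four-point LSS inequality averaged over $x$ (translation invariance), choosing $p_k=2^{-1}$-type thinning only once total density drops below $m/n$.

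For the lower bound, $\mathsf K\ge1$ follows from $X=\R$ with $f(x)=\sum_j e^{\pi i x_j/m}$-type maps, or already from $n=1$ linear test functions. For $\mathsf K\ge n/m$, we use the snowflake universality of $\cP_2(\R^3)$ as a fixed $\CBB$ space: the $\tfrac12$-snowflake of any finite metric space embeds into it bi-Lipschitzly with arbitrarily small distortion. We apply this to the $\ell_1$ (Hamming-type) metric on $\Z_{2m}^n$ with $f$ the identity. Then $d(f(x+me_j),f(x))^2=m$ and $d(f(x+\varepsilon),f(x))^2=n$. Hence the left-hand side of \eqref{eq:K} is $nm(2m)^n$ and the right-hand side is $Km^2n(2m)^n$, which gives only $K\ge1/m$. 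To get $n/m$ we must instead use the cycle metric $\rho(x,y)=\sum_j|x_j-y_j|_{2m}$, whose square root gives $m$ per antipodal edge but $n$ per diagonal. That is not enough either, so we use $\ell_\infty$-type distance $\max_j|x_j-y_j|$. Its snowflake gives $m$ for antipodal edges and $1$ for diagonals, so the left side is $nm(2m)^n$ and the right side is $Km^2(2m)^n$. This yields $K\ge n/m$ as desired.
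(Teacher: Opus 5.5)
Your bound $\mathsf K\ge n/m$ (square-root snowflake of the $\ell_\infty$-product of cyclic distances, embedded into $\cP_2(\R^3)$) is exactly the paper's argument. The upper bound, however, has genuine gaps.

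First, the four-point inequality you attribute to LSS is the CAT(0) short-diagonals inequality, and it is false in $\CBB$. Four equally spaced points on a circle of circumference $4$ give squared diagonals summing to $8$ against squared sides summing to $4$. Second, your halving step has no mechanism. Splitting $2^k\delta$ at the point $x+2^{k-1}\delta$ only reproduces the triangle-inequality factor $4$, with no thinning, and the $\theta_k$ are never defined.

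The missing idea is the translation-averaged form $\E\,d(f(x+U-U'),f(x))^2\le2\,\E\,d(f(x+U),f(x))^2$ for i.i.d.\ $U,U'$. This is LSS with the competitor $z=f(x)$, so you need LSS for every $z$, not for a suitable one. Apply it conditionally on a support $S$ to $U=2^s\sigma_S$ with uniform signs. Each coordinate of $U-U'$ is then $0$ with probability $1/2$ and $\pm2^{s+1}$ with probability $1/4$ each. So one application halves the density and doubles the step at cost $2$, not $4$.

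Density and step are therefore locked together ($2^{-s}$ versus $2^s$). After $r$ levels the cost is $m$ and the density is exactly $1/m$, so your top density $\min\{1,m/n\}$ is not available. Run on all $n$ coordinates, this only gives $\mathsf K\le(1-\frac1m)^{1-n}$, which is exponential in $n/m$. Linear growth requires partitioning into blocks $B$ with $|B|=k\le m$. At the first level you start from $U=\eta_{B^c}+\sigma_B$ with $\eta$ frozen, so the coordinates outside $B$ cancel in $U-U'$. Averaging over $\eta$ then restores the full sign-cube average on the right.

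Your extraction step also runs in the wrong direction. The triangle inequality plus Cauchy--Schwarz bounds $d\bigl(f(x+m\sum_{j\in R}e_j),f(x)\bigr)^2$ from above by $|R|$ times a sum of single-edge terms. You need the opposite: each $d(f(x+me_j),f(x))^2$ bounded by the thinned jump, and a small multi-coordinate jump says nothing about individual edges. The right step is positivity: discard every outcome except $R=\{j\}$, which has probability $\frac1m(1-\frac1m)^{k-1}\ge\frac1{em}$ when $k\le m$. This gives $e\,m^2$ per block, and $\lceil n/m\rceil$ is simply the number of blocks, not a Cauchy--Schwarz loss.

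Finally, your test maps do not give $\mathsf K\ge1$. There are no nonconstant linear maps $\Z_{2m}\to\R$. The map $e^{\pi i x_1/m}\in\R^2$ (or your sum over $j$) yields only $\mathsf K\ge\bigl(m\sin\frac{\pi}{2m}\bigr)^{-2}$, which equals $1/2$ for $m=2$. In fact, for $n=1$, $m=2$ every Hilbert-space-valued map satisfies \eqref{eq:K} with $K=1/2$. You need a non-Hilbert $\CBB$ target, e.g.\ the circle $\R/2m\Z$ with $f(x)=x_1$, which gives $m^2\le\mathsf K m^2$.
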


The lower bound $\mathsf K_{\CBB}(n,m)\ge n/m$ holds for all
$n,m\in\mathbb N$; see Proposition~\ref{prop:lower}.  Together with
the upper bound in Theorem~\ref{thm:quantitative}, it gives the
optimal order of the scaling parameter.

\begin{corollary}\label{cor:answer}
The optimal universal scaling parameter in the metric cotype $2$
inequality for complete Alexandrov spaces of nonnegative curvature has
order $n$.

More precisely, for $n\ge2$, \eqref{eq:mc} holds with
\[
 m=2^{\lfloor\log_2 n\rfloor}\le n
 \qquad\text{and}\qquad
 \Gamma=\sqrt{2e}
\]
for every complete Alexandrov space $X$ of nonnegative curvature and
every map $f:\Z_{2m}^n\to X$.

Conversely, if for some $n,m\in\mathbb N$ and $\Gamma>0$,
\eqref{eq:mc} holds for every complete Alexandrov space $X$ of
nonnegative curvature and every map $f:\Z_{2m}^n\to X$, then $m\ge\frac n{\Gamma^2}.$
In particular, for $n\ge2$,
\[
 \frac{n}{2e}
 \le \mathsf m_{\CBB}(n,\sqrt{2e})
 \le 2^{\lfloor\log_2n\rfloor}
 \le n.
\]
\end{corollary}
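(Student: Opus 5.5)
The plan is to derive the corollary directly from Theorem~\ref{thm:quantitative} and Proposition~\ref{prop:lower}, by translating between the constant $\mathsf K_{\CBB}(n,m)$ in \eqref{eq:K} and the constant $\Gamma$ in \eqref{eq:mc}. Squaring both sides of \eqref{eq:mc} shows that \eqref{eq:mc} with constant $\Gamma$ is exactly \eqref{eq:K} with $K=\Gamma^2$. I will also note that the infimum defining $\mathsf K_{\CBB}(n,m)$ is attained whenever it is finite. Indeed, for a fixed $X$ and $f$, both sides of \eqref{eq:K} are finite sums, so if the inequality holds for a sequence $K_k\downarrow K$ it also holds for $K$. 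Hence \eqref{eq:mc} holds for all $X\in\CBB$ and all $f$ if and only if $\mathsf K_{\CBB}(n,m)\le\Gamma^2$.

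\emph{Upper bound.} Let $n\ge2$ and $m=2^{\lfloor\log_2 n\rfloor}$. Then $m=2^r$ with $r\ge1$, so $m\ge2$ and Theorem~\ref{thm:quantitative} applies. Since $m\le n<2m$, we have $1\le n/m<2$, so $\lceil n/m\rceil\le2$. The upper bound in \eqref{eq:quantitative} then gives $\mathsf K_{\CBB}(n,m)\le 2e$. By the attainment remark, \eqref{eq:mc} holds with $\Gamma=\sqrt{2e}$ for every $X$ and every $f$. The same inequality shows $\mathsf m_{\CBB}(n,\sqrt{2e})\le 2^{\lfloor\log_2 n\rfloor}\le n$.

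\emph{Lower bound.} Suppose \eqref{eq:mc} holds for some $n,m,\Gamma$, for all $X\in\CBB$ and all $f$. Then $\mathsf K_{\CBB}(n,m)\le\Gamma^2$. Proposition~\ref{prop:lower}, which is valid for all $n,m\in\mathbb N$, gives $n/m\le\mathsf K_{\CBB}(n,m)\le\Gamma^2$, that is, $m\ge n/\Gamma^2$. Applying this with $\Gamma=\sqrt{2e}$ to every $m$ admissible in \eqref{eq:scaling-function} yields $\mathsf m_{\CBB}(n,\sqrt{2e})\ge n/(2e)$.

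Combining the two bounds shows that the optimal universal scaling parameter is of order $n$. There is no real obstacle here; the only point needing care is the attainment of the infimum, which follows from the finite-sum observation above.
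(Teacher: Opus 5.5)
Your proposal is correct and follows the paper's route. The upper assertion comes from the dyadic upper bound at $m=2^{\lfloor\log_2 n\rfloor}$ with $\lceil n/m\rceil\le2$, and the converse from Proposition~\ref{prop:lower}; the only cosmetic difference is that you pass through $\mathsf K_{\CBB}(n,m)$ and therefore need your (valid) attainment remark, whereas the paper reads the explicit inequality with constant $e\lceil n/m\rceil$ directly off the block-summing argument built on Proposition~\ref{prop:block}.
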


For the upper bound, we use only the Lang--Schroeder--Sturm inequality.
Its translation-averaged form allows us to isolate a chosen block of
coordinates.  We then repeatedly apply Bernoulli thinning to the support,
with retention probability $1/2$, while doubling the retained increments.
At the final level, each coordinate is retained with probability $1/m$
and produces a shift by $m$.  Keeping the singleton supports and summing
over blocks of size at most $m$ gives the required estimate.

For the lower bound, we equip $\Z_{2m}^n$ with the $\ell_\infty$-product
of the cyclic distance on $\Z/(2m\Z)$.  The long coordinate increments
then have length $m$, while the sign increments have length $1$.  The
square-root snowflake of this metric embeds with distortion arbitrarily
close to $1$ into the fixed $2$-Wasserstein space $\cP_2(\R^3)$
\cite{ANN18}.  Applying \eqref{eq:mc} to these embeddings gives $ m\ge \frac{n}{\Gamma^2}.$

Section~2 proves the main theorem.  Section~3 discusses the parity
obstruction for odd scaling parameters and the scope of the upper bound.

\section{Proof of the main theorem}

The proof has two complementary parts.  We first prove the upper bound
in Theorem~\ref{thm:quantitative} using the Lang--Schroeder--Sturm
inequality and a dyadic thinning argument.  We then turn to the lower
bound and use snowflake embeddings into the fixed space
$\cP_2(\R^3)$.

\subsection{The Lang--Schroeder--Sturm thinning principle}

The upper bound uses one geometric fact about spaces of nonnegative
Alexandrov curvature.  If $(X,d_X)$ is a complete $\mathrm{CBB}(0)$
space, $Z$ is a finitely supported $X$-valued random variable, and
$Z'$ is an independent copy of $Z$, then
\begin{equation}
 \E d_X(Z,Z')^2
 \le 2\inf_{z\in X}\E d_X(Z,z)^2.
 \label{eq:LSS}
\end{equation}
The implication from nonnegative Alexandrov curvature to
\eqref{eq:LSS} is due to Lang and Schroeder \cite{LS97}.  The converse
for complete geodesic spaces is due to Sturm \cite{Sturm99}; see also
\cite[equation~(6)]{ANN18}.

We first average this inequality over translations of the discrete
torus.  This is the form that will be used in the thinning argument.

\begin{lemma}\label{lem:averaged-LSS}
Let $(X,d_X)$ be a metric space such that every finitely supported
$X$-valued random variable $Z$, with an independent copy $Z'$, satisfies
\[
 \E d_X(Z,Z')^2
 \le 2\inf_{z\in X}\E d_X(Z,z)^2.
\]
Fix $m,n\in\mathbb N$ and a map $f:\Z_{2m}^n\longrightarrow X.$
Let $\mu$ be a probability measure on $\Z_{2m}^n$, let $U,U'$ be
independent random variables with law $\mu$, and let $x$ be uniform on
$\Z_{2m}^n$, independently of $U,U'$.  Then
\begin{equation}
 \E_{x,U,U'}d_X\bigl(f(x+U-U'),f(x)\bigr)^2
 \le 2\E_{x,U}d_X\bigl(f(x+U),f(x)\bigr)^2.
 \label{eq:averaged-LSS}
\end{equation}
\end{lemma}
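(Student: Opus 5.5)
The plan is to apply the hypothesis pointwise in a shifted base point and then average over translations. Fix $y\in\Z_{2m}^n$ and consider the finitely supported random variable $Z_y=f(y+U)$, where $U$ has law $\mu$; an independent copy is $Z_y'=f(y+U')$. The hypothesis, with the infimum bounded by the value at the particular point $z=f(y)$, gives
\[
 \E_{U,U'}d_X\bigl(f(y+U),f(y+U')\bigr)^2
 \le 2\E_{U}d_X\bigl(f(y+U),f(y)\bigr)^2 .
\]
Averaging over $y$ uniform on $\Z_{2m}^n$ turns the right-hand side into exactly the right-hand side of \eqref{eq:averaged-LSS}.

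It remains to identify the left-hand side after averaging. For fixed $U,U'$, the substitution $x=y+U'$ is a bijection of $\Z_{2m}^n$ and preserves the uniform measure. Under it, $y+U=x+U-U'$ and $y+U'=x$, so
\[
 \E_y d_X\bigl(f(y+U),f(y+U')\bigr)^2=\E_x d_X\bigl(f(x+U-U'),f(x)\bigr)^2 .
\]
Taking expectation over the independent pair $(U,U')$ and using Fubini, which is trivial here because everything is a finite sum, yields the left-hand side of \eqref{eq:averaged-LSS}. This completes the argument.

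There is no real obstacle. The only points to check are that $Z_y$ is finitely supported, which holds because the torus is finite, and that translation invariance of the uniform measure is applied with $U,U'$ held fixed. That is legitimate because $x$ is independent of $(U,U')$. The key design choice is to bound the infimum by the value at $z=f(y)$, rather than trying to use the barycenter.
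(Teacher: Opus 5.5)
Your proof is correct and follows the paper's argument: apply the hypothesis to $Z=f(y+U)$ with competitor $z=f(y)$, and average over the base point. Then, for fixed $U,U'$, use translation invariance of the uniform measure on $\Z_{2m}^n$ to rewrite $d_X(f(y+U),f(y+U'))$ as $d_X(f(x+U-U'),f(x))$.
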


\begin{proof}
For each fixed $x$, apply the assumed inequality to the random variable
$Z=f(x+U)$ and use $f(x)$ as a competitor for the center.  We obtain
\[
 \E_{U,U'}d_X\bigl(f(x+U),f(x+U')\bigr)^2
 \le 2\E_Ud_X\bigl(f(x+U),f(x)\bigr)^2.
\]
Now average over $x$.  For fixed $U,U'$, translation invariance on
$\Z_{2m}^n$ gives
\[
 \E_x d_X\bigl(f(x+U),f(x+U')\bigr)^2
 =\E_x d_X\bigl(f(x+U-U'),f(x)\bigr)^2,
\]
which proves \eqref{eq:averaged-LSS}.
\end{proof}

We now isolate one block of coordinates.  The point is to turn the
short sign increments into the long increments $me_j$ while keeping a
uniform bound on the loss.

\begin{proposition}\label{prop:block}
Let $m=2^r\ge2$, where $r\in\mathbb N$, and let
$B\subseteq\{1,\ldots,n\}$ be nonempty with cardinality $k$.
Let $x$ and $\varepsilon$ be independent and uniformly distributed on
$\Z_{2m}^n$ and $\{-1,1\}^n$, respectively.
If $(X,d_X)$ is a complete $\mathrm{CBB}(0)$ space, then every map
$f:\Z_{2m}^n\to X$ satisfies
\begin{equation}
 \begin{split}
 &\sum_{j\in B}\E_x
 d_X\bigl(f(x+me_j),f(x)\bigr)^2\\
 &\qquad\le
 m^2\left(1-\frac1m\right)^{1-k}
 \E_{x,\varepsilon}
 d_X\bigl(f(x+\varepsilon),f(x)\bigr)^2.
 \end{split}
 \label{eq:block}
\end{equation}
\end{proposition}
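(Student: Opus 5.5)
The plan is to apply the averaged Lang--Schroeder--Sturm inequality once per dyadic level, $r$ times in all. Each application halves the density of the active coordinates and doubles the size of the increments. The constant then accumulates to a factor $2^r=m$. A final extraction of the singleton supports supplies the remaining factor $m(1-1/m)^{1-k}$.

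\emph{Basic step.} For $S\subseteq B$ and signs $\eta\in\{-1,1\}^n$, write $\eta_S=\sum_{j\in S}\eta_je_j$. For $t=0,1,\ldots,r$ let $S_t\subseteq B$ be a random subset containing each $j\in B$ independently with probability $2^{-t}$, and let $\eta$ be uniform signs independent of $S_t$. Set
\[
 D_t=\E_{x,S_t,\eta}\,d_X\bigl(f(x+2^t\eta_{S_t}),f(x)\bigr)^2 .
\]
The combinatorial observation is coordinatewise. If $a,b$ are independent uniform signs, then $a-b$ equals $0$ with probability $1/2$ and $\pm2$ with a uniform random sign otherwise. Hence if $U=2^t\eta_{S_t}$ and $U'$ is an independent copy, then $U-U'$ has exactly the law of $2^{t+1}\eta_{S_{t+1}}$: a coordinate survives only if it lies in both supports and the two signs differ, which has probability $2^{-t}\cdot2^{-t}\cdot\frac12$. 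This is not $2^{-t-1}$, so I would refine the thinning. I would take $U=2^t\eta'_{S_t}$, where $\eta'_j\in\{-1,0,1\}$ puts mass $\frac12$ on $0$ and $\frac14$ on each of $\pm1$. Since $\eta'_j-\eta''_j$ takes values in $\{0,\pm1,\pm2\}$, this fails as well. The cleanest fix is to make the level-$t$ variable itself the difference variable. Define $V_0=\varepsilon_B$ and recursively $V_{t+1}=V_t-V_t'$. Lemma~\ref{lem:averaged-LSS} gives $\E d(f(x+V_{t+1}),f(x))^2\le2\,\E d(f(x+V_t),f(x))^2$. Modulo $2m$, each coordinate of $V_r$ is a symmetric sum of $2^r$ signed $\pm1$'s. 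I would then compute directly the probability that $V_r=\pm m e_j$ while every other coordinate of $V_r$ is $\equiv0\pmod{2m}$, and check that it is at least $\frac1m(1-\frac1m)^{k-1}$. This is the step I expect to be the main obstacle, namely getting the singleton probability right. Failing an exact computation, I would look for an identity in law modulo $2m$ with the Bernoulli$(1/m)$ model.

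\emph{Removing coordinates outside $B$ (first level).} The right-hand side of \eqref{eq:block} involves all $n$ signs, so the first step cannot be a plain application of Lemma~\ref{lem:averaged-LSS}. Instead I would use the freedom in the center. For a fixed $a\in\{-1,1\}^{B^c}$, apply \eqref{eq:LSS} to $Z=f(x+U)$ with competitor $f(x+a)$ instead of $f(x)$. After averaging over $x$ this gives
\[
 \E d_X(f(x+U-U'),f(x))^2\le 2\,\E d_X(f(x+U-a),f(x))^2 .
\]
Now average over $a$ uniform and independent, with $U=\varepsilon_B$. Since $\varepsilon_B-a$ is uniform on $\{-1,1\}^n$, the right-hand side becomes $2\,\E_{x,\varepsilon}d_X(f(x+\varepsilon),f(x))^2$. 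The coordinates outside $B$ are thus discarded at the cost of the same factor $2$, and all later levels live inside $B$.

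\emph{Conclusion.} Chaining the $r$ levels gives $D_r\le 2^r D_0'=m\,\E_{x,\varepsilon}d_X(f(x+\varepsilon),f(x))^2$, where $D_0'$ is the full sign average. At the last level the increment is a sum of terms $\pm m e_j$ over a random subset of $B$. Since $-me_j\equiv me_j$ modulo $2m$, signs are irrelevant there. Dropping every outcome except those where the support is a singleton $\{j\}$, each of probability $\frac1m(1-\frac1m)^{k-1}$ in the Bernoulli$(1/m)$ model, yields
\[
 \frac1m\Bigl(1-\frac1m\Bigr)^{k-1}\sum_{j\in B}\E_x d_X(f(x+me_j),f(x))^2\le D_r .
\]
Combined with the previous bound, this is \eqref{eq:block}.
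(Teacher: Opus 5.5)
Your first level (competitor $f(x+a)$, then averaging over uniform $a\in\{-1,1\}^{B^c}$) is correct and amounts to the paper's choice $U=\eta_{B^c}+\sigma_B$. The final singleton extraction is also the paper's.

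The gap is the step from level $t$ to level $t+1$ for $t\ge1$. Your Conclusion assumes the last-level increment is $m\sum_{j\in R}e_j$ with $R$ a Bernoulli$(1/m)$ subset of $B$. None of the chains you actually construct has this law. With independently drawn supports the difference has the wrong law. It is not even supported on $\{0,\pm2^{t+1}\}$, since a coordinate lying in exactly one support survives with value $\pm2^t$. Your ``cleanest fix'' $V_{t+1}=V_t-V_t'$ gives valid inequalities, but it makes each coordinate of $V_r$ a sum of $m$ independent Rademacher signs. Hence
\[
 \Pp\bigl\{V_r\equiv me_j \pmod{2m}\bigr\}=2^{1-m}\Bigl(\tbinom{m}{m/2}2^{-m}\Bigr)^{k-1},
\]
which for $m\ge4$ is exponentially smaller than $\frac1m(1-\frac1m)^{k-1}$. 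Nor is there an identity in law modulo $2m$ with the Bernoulli model: already for $m=4$ the walk puts total mass $\frac12$ on the residues $\pm2$. That route yields a constant at least $m2^{m-1}$, not \eqref{eq:block}.

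The missing idea is to condition on the support \emph{before} applying Lemma~\ref{lem:averaged-LSS}, and only afterwards average over it. For a fixed $S\subseteq B$, take $\mu_S$ to be the uniform law of $2^t\eta_S$ with $\eta\in\{-1,1\}^S$. Then $U,U'$ are i.i.d.\ from $\mu_S$ and share the support $S$. In $U-U'$ each coordinate of $S$ is $0$ with probability $\frac12$ and $\pm2^{t+1}$ with probability $\frac14$ each, while all other coordinates vanish. Now average the lemma's inequality over $S=S_t$. The right-hand side becomes $2D_t$. On the left, each coordinate of $B$ is nonzero independently with probability $2^{-t}\cdot\frac12=2^{-t-1}$, so $U-U'$ has exactly the law of $2^{t+1}\eta_{S_{t+1}}$. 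This gives $D_{t+1}\le2D_t$. Combined with your first level and singleton extraction, it proves \eqref{eq:block}; this is exactly the paper's argument.
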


\begin{proof}
Fix the block $B$.  The argument gradually trades the number of active
coordinates for the size of their increments.  At level $s$, each
coordinate in $B$ will be retained with probability $2^{-s}$, while a
retained coordinate contributes an increment of size $2^s$.  Thus one
step of the argument halves the retention probability and doubles the
size of the increment.  Since $m=2^r$, the last level produces
increments of size $m$.

For $1\le s\le r$, let $V_s$ be the random increment obtained by
retaining each coordinate of $B$ independently with probability
$2^{-s}$, assigning an independent sign to every retained coordinate,
and setting $ V_s=2^s\sum_{j\in R_s}\sigma_j e_j,$
where $R_s\subseteq B$ is the set of retained coordinates.  All these
random choices are independent of the uniform $x\in\Z_{2m}^n$.

We first connect the original sign average with the first level.
Here and below, complements are taken in $\{1,\ldots,n\}$, and vectors
indexed by subsets of this coordinate set are identified with their zero
extensions to all $n$ coordinates.  Fix $\eta\in\{-1,1\}^{B^c}$ and let
$\mu_\eta$ be the uniform law of
\[
 U=\eta_{B^c}+\sigma_B,
 \qquad \sigma\in\{-1,1\}^{B}.
\]
Apply Lemma~\ref{lem:averaged-LSS} with this measure.  The two
independent samples have the same fixed coordinates $\eta$ outside
$B$, so these coordinates disappear in their difference.  Inside
$B$, the difference of two independent signs is $0$ with probability $1/2$
and $2$ or $-2$ with probability $1/4$ each.
Consequently, $U-U'$ has the same distribution as $V_1$.

Now average over uniform $\eta$.  On the right-hand side, $U$ becomes
uniform on the full sign cube $\{-1,1\}^n$.  We therefore obtain
\begin{equation}
 \E_{x,V_1}
 d_X\bigl(f(x+V_1),f(x)\bigr)^2
 \le
 2\E_{x,\varepsilon}
 d_X\bigl(f(x+\varepsilon),f(x)\bigr)^2.
 \label{eq:first-level}
\end{equation}

We next show that one application of the averaged LSS inequality moves
us from level $s$ to level $s+1$.  Fix $1\le s<r$ and choose a random
set $S\subseteq B$ by retaining each coordinate independently with
probability $2^{-s}$.  Conditional on $S$, let
\[
 U=2^s\sigma_S,
 \qquad \sigma\in\{-1,1\}^{S},
\]
with $\sigma$ uniform.

For each fixed $S$, apply Lemma~\ref{lem:averaged-LSS} to this law and
only then average over $S$.  The order here is important: once $S$ is
fixed, $U$ and $U'$ are independent samples from the same measure, as
required by the lemma.

After averaging over $S$, the marginal distribution of $U$ is exactly
that of $V_s$.  For a coordinate belonging to $S$, the coordinate difference is $0$ with
probability $1/2$ and $2^{s+1}$ or $-2^{s+1}$ with probability $1/4$
each.  Thus $U-U'$ has exactly the distribution of
$V_{s+1}$.  Lemma~\ref{lem:averaged-LSS} therefore gives
\begin{equation}
 \begin{split}
 &\E_{x,V_{s+1}}
 d_X\bigl(f(x+V_{s+1}),f(x)\bigr)^2\\
 &\qquad\le
 2\E_{x,V_s}
 d_X\bigl(f(x+V_s),f(x)\bigr)^2.
 \end{split}
 \label{eq:recursion}
\end{equation}

Starting from \eqref{eq:first-level} and iterating \eqref{eq:recursion}
until level $r$, we arrive at
\begin{equation}
 \E_{x,V_r}
 d_X\bigl(f(x+V_r),f(x)\bigr)^2
 \le
 m\E_{x,\varepsilon}
 d_X\bigl(f(x+\varepsilon),f(x)\bigr)^2.
 \label{eq:last-level}
\end{equation}

It remains to read the last level.  Here a coordinate is retained with
probability $2^{-r}=1/m$, and a retained coordinate contributes
$\pm m$.  Since $m=-m$ in $\Z_{2m}$, the sign no longer matters.
If $R\subseteq B$ denotes the retained set, then $ V_r=m\sum_{j\in R}e_j,$
where every coordinate of $B$ belongs to $R$ independently with
probability $1/m$.

We only need the outcomes in which a single coordinate is retained.
For each $j\in B$,
\[
 \Pp\{R=\{j\}\}
 =
 \frac1m\left(1-\frac1m\right)^{k-1}.
\]
On this event, $V_r=me_j$.  Since the integrand in
\eqref{eq:last-level} is nonnegative, we may discard all other
outcomes and obtain
\[
 \frac1m\left(1-\frac1m\right)^{k-1}
 \sum_{j\in B}\E_x
 d_X\bigl(f(x+me_j),f(x)\bigr)^2
 \le
 m\E_{x,\varepsilon}
 d_X\bigl(f(x+\varepsilon),f(x)\bigr)^2.
\]
Rearranging gives \eqref{eq:block}.
\end{proof}

\begin{proof}[Proof of the upper bound in Theorem~\ref{thm:quantitative}]
Proposition~\ref{prop:block} gives the required estimate on any block
of at most $m$ coordinates.  We now cover all coordinates by such
blocks.

Choose a partition of $\{1,\ldots,n\}$ into $\left\lceil\frac nm\right\rceil$
nonempty blocks, each of cardinality at most $m$.  For a block
$B$ of cardinality $k$, we have
\[
 \left(1-\frac1m\right)^{1-k}
 \le
 \left(1-\frac1m\right)^{1-m}
 =
 \left(1+\frac1{m-1}\right)^{m-1}
 <e.
\]
Thus Proposition~\ref{prop:block} applies to every block with the same
universal factor $e$.  Summing the resulting inequalities over the
partition gives
\[
 \sum_{j=1}^n\E_x
 d_X\bigl(f(x+me_j),f(x)\bigr)^2
 \le
 e\left\lceil\frac nm\right\rceil m^2
 \E_{x,\varepsilon}
 d_X\bigl(f(x+\varepsilon),f(x)\bigr)^2.
\]
This proves the upper bound.
\end{proof}

If $n\ge2$, then $n/2<2^{\lfloor\log_2n\rfloor}\le n$, so
$\lceil n/2^{\lfloor\log_2n\rfloor}\rceil\le2$.  Proposition
\ref{prop:block} therefore proves the upper assertion of
Corollary~\ref{cor:answer}.  Alternatively, the next dyadic integer
$2^{\lceil\log_2n\rceil}<2n$ yields the smaller universal
constant $\Gamma=\sqrt e$ (for each fixed $n$, the coefficient supplied
by the proof is strictly smaller than $e$).

\subsection{Snowflake universality and the lower bound}

The upper bound is now complete.  To see that its linear order is
sharp, we test the metric cotype inequality on one fixed
$\mathrm{CBB}(0)$ space.  The idea is to put a metric on the discrete
torus for which the long increments $me_j$ have length $m$, while the
sign increments have length $1$, and then realize the square-root of
this metric inside $\cP_2(\R^3)$.

The $2$-Wasserstein space $(\cP_2(\R^3),W_2)$ of Borel probability
measures on $\R^3$ with finite second moment is complete and has
nonnegative Alexandrov curvature; see \cite[Section~1.3]{ANN18}.

\begin{proposition}\label{prop:lower}
Fix $n,m\in\mathbb N$.  Suppose that every map $f:\Z_{2m}^n\to\cP_2(\R^3)$
satisfies the metric cotype $2$ inequality with scaling parameter $m$
and constant $\Gamma$.  Then $ m\ge\frac n{\Gamma^2}.$
Consequently, $ \mathsf K_{\CBB}(n,m)\ge\frac nm.$

\end{proposition}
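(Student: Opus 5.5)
We plan to test the metric cotype inequality on a snowflaked $\ell_\infty$ torus embedded into $\cP_2(\R^3)$. Equip $\Z_{2m}^n$ with the metric
\[
 \rho(x,y)=\max_{1\le j\le n}|x_j-y_j|_{2m},
\]
where $|t|_{2m}$ denotes the cyclic distance on $\Z/(2m\Z)$. Under $\rho$, every long increment has $\rho(x+me_j,x)=m$, and every sign increment has $\rho(x+\varepsilon,x)=1$, for every $x$ and every $\varepsilon\in\{-1,1\}^n$. (If $m=1$, then $\varepsilon$ and $me_j$ still have lengths $1$ and $1$, and the argument below gives $1\ge n/\Gamma^2$ directly.) Since $(\Z_{2m}^n,\rho)$ is a finite metric space, we invoke the snowflake universality theorem of \cite{ANN18}: for every $\delta>0$ there is an embedding $F_\delta:\Z_{2m}^n\to\cP_2(\R^3)$ and a scale $\lambda>0$ with
\[
 \lambda\rho(x,y)^{1/2}\le W_2\bigl(F_\delta(x),F_\delta(y)\bigr)\le(1+\delta)\lambda\rho(x,y)^{1/2}
\]
for all $x,y$. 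This is the only nontrivial input. The main point to verify is that the cited result applies to arbitrary finite metric spaces with distortion arbitrarily close to $1$ in the fixed space $\cP_2(\R^3)$, not merely to some higher-dimensional $\R^d$. We will quote it in exactly that form.

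Next, apply \eqref{eq:mc} to $f=F_\delta$. Cancelling $\lambda^2$, the left side squared is at least
\[
 \sum_{j=1}^n\sum_x\rho(x+me_j,x)=n(2m)^n m .
\]
The right side squared is at most
\[
 \Gamma^2m^2(1+\delta)^2\,2^{-n}\sum_\varepsilon\sum_x\rho(x+\varepsilon,x)=\Gamma^2m^2(1+\delta)^2(2m)^n .
\]
Hence $nm\le\Gamma^2m^2(1+\delta)^2$, that is, $n\le\Gamma^2 m(1+\delta)^2$. Letting $\delta\to0$ gives $m\ge n/\Gamma^2$.

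For the consequence, note that $\cP_2(\R^3)$ is a complete $\CBB$ space. Therefore, for any $K>\mathsf K_{\CBB}(n,m)$, inequality \eqref{eq:K} holds for every map into $\cP_2(\R^3)$. This is \eqref{eq:mc} with $\Gamma=\sqrt K$, so $m\ge n/K$. Taking the infimum over such $K$ yields $\mathsf K_{\CBB}(n,m)\ge n/m$. If $\mathsf K_{\CBB}(n,m)=\infty$, the bound is trivial.
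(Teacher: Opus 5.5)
Your proposal is correct and follows the paper's proof essentially verbatim. It uses the same $\ell_\infty$-product of cyclic distances and the same appeal to \cite[Theorem~1]{ANN18} for a $(1+\delta)$-distortion embedding of $(\Z_{2m}^n,\sqrt{\rho})$ into $\cP_2(\R^3)$, followed by the same comparison $nm\le\Gamma^2m^2(1+\delta)^2$ with $\delta\downarrow0$; your derivation of $\mathsf K_{\CBB}(n,m)\ge n/m$ via $\Gamma=\sqrt K$ for $K>\mathsf K_{\CBB}(n,m)$ is just a slightly more explicit version of the paper's final step.
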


\begin{proof}
We begin with the metric that separates the two kinds of increments.
For $x,y\in\Z_{2m}^n$, set
\begin{equation}
 \rho(x,y)=
 \max_{1\le j\le n}\min_{a\in\Z}|x_j-y_j+2ma|.
 \label{eq:rho}
\end{equation}
Thus $\rho$ is the $\ell_\infty$-product of the cyclic distance on
$\Z_{2m}$.  In particular,
\begin{equation}
 \rho(x,x+me_j)=m,
 \qquad
 \rho(x,x+\varepsilon)=1
 \quad
 \bigl(\varepsilon\in\{-1,1\}^n\bigr).
 \label{eq:rho-values}
\end{equation}

Now apply the snowflake-universality theorem of Andoni, Naor, and
Neiman \cite[Theorem~1]{ANN18}.  For every $\delta>0$, the finite
metric space $(\Z_{2m}^n,\sqrt{\rho})$ admits an embedding $F:\Z_{2m}^n\longrightarrow\cP_2(\R^3)$
whose distortion is at most $1+\delta$.  After rescaling, we may
therefore choose $a>0$ so that
\[
 a\sqrt{\rho(x,y)}
 \le W_2(F(x),F(y))
 \le (1+\delta)a\sqrt{\rho(x,y)}.
\]

The two kinds of increments are now easy to compare.  Every long
increment satisfies
\[
 W_2\bigl(F(x+me_j),F(x)\bigr)^2\ge a^2m,
\]
whereas every sign increment satisfies
\[
 W_2\bigl(F(x+\varepsilon),F(x)\bigr)^2
 \le (1+\delta)^2a^2.
\]
Applying the metric cotype inequality to $F$ and cancelling the common
factor coming from the sum over $x$ gives $na^2m
 \le
 \Gamma^2m^2(1+\delta)^2a^2.$
Hence $ m\ge\frac{n}{\Gamma^2(1+\delta)^2}.$

Letting $\delta\downarrow0$ proves the first assertion.

The same comparison in the definition of
$\mathsf K_{\CBB}(n,m)$ gives $\mathsf K_{\CBB}(n,m)
 \ge
 \frac{n}{m(1+\delta)^2}$.
Again letting $\delta\downarrow0$ yields $ \mathsf K_{\CBB}(n,m)\ge\frac nm.$
\end{proof}

Since the space $\cP_2(\R^3)$ is fixed, independently of $n$ and $m$,
Proposition~\ref{prop:lower} rules out any sublinear universal scaling
parameter.

\begin{proof}[Completion of the proof of Theorem~\ref{thm:quantitative}]
Proposition~\ref{prop:lower} gives $ \mathsf K_{\CBB}(n,m)\ge\frac nm.$
It remains only to prove $ \mathsf K_{\CBB}(n,m)\ge1.$

For this, a one-dimensional example is enough.  Let $ C_{2m}=\R/(2m\Z)$
be the intrinsic circle of circumference $2m$, and consider
\[
 f:\Z_{2m}^n\longrightarrow C_{2m},
 \qquad
 f(x)=x_1\pmod{2m}.
\]
The space $C_{2m}$ is a complete $\CBB$ space.  Every sign increment
changes $f$ by distance $1$.  Among the long increments, only the first
coordinate contributes, and its contribution has distance $m$.
Therefore the defining inequality for $\mathsf K_{\CBB}(n,m)$ gives $m^2\le \mathsf K_{\CBB}(n,m)m^2.$
Hence $ \mathsf K_{\CBB}(n,m)\ge1.$
Together with Proposition~\ref{prop:lower}, this gives
\[
 \mathsf K_{\CBB}(n,m)\ge
 \max\left\{1,\frac nm\right\},
\]
and completes the proof.
\end{proof}

\section{Two remarks on the scaling parameter}

\begin{remark}
Suppose that $n\ge2$ and $m$ is odd.  Let $ \pi:\Z_{2m}\longrightarrow\Z_2$
be the canonical quotient map, and identify $\Z_2$ isometrically
with $\{0,1\}\subset\R$.  For
$x=(x_1,\ldots,x_n)\in\Z_{2m}^n$, define
\[
 f:\Z_{2m}^n\longrightarrow\R,
 \qquad
 f(x)=\pi(x_1+x_2).
\]
Since $\R$ is a complete $\CBB$ space, the metric cotype $2$
inequality \eqref{eq:mc} would give
\[
 \left(
 \sum_{j=1}^n\sum_{x\in\Z_{2m}^n}
 |f(x+me_j)-f(x)|^2
 \right)^{1/2}
 \le
 \Gamma m
 \left(
 \frac1{2^n}
 \sum_{\varepsilon\in\{-1,1\}^n}
 \sum_{x\in\Z_{2m}^n}
 |f(x+\varepsilon)-f(x)|^2
 \right)^{1/2}.
\]

For every $\varepsilon\in\{-1,1\}^n$, the integer
$\varepsilon_1+\varepsilon_2$ is even.  Hence $ f(x+\varepsilon)=f(x),$
so the right-hand side of \eqref{eq:mc} vanishes.

The long increments behave differently.  Since $m$ is odd,
$\pi(m)=1$, and therefore
\[
 |f(x+me_j)-f(x)|=1,
 \qquad j=1,2.
\]
Thus the left-hand side of \eqref{eq:mc} is nonzero.  No finite
metric cotype constant can therefore work, and $\mathsf K_{\CBB}(n,m)=\infty
 \qquad (n\ge2,\ m\ {\rm odd}).$

In particular, the upper bound cannot extend to all scaling
parameters $m$.
\end{remark}

\begin{remark}
The proof of the upper bound uses only the following property of $X$:
for every finitely supported $X$-valued random variable $Z$, with
$Z'$ an independent copy,
\[
 \E d_X(Z,Z')^2
 \le
 2\inf_{z\in X}\E d_X(Z,z)^2.
\]
Consequently, the same upper bound holds for every metric space
satisfying this inequality, without any geodesicity assumption.

For complete geodesic spaces, this inequality characterizes
nonnegative Alexandrov curvature \cite{Sturm99}.
\end{remark}

\end{document}